\newtheorem*{theorem}{Theorem}
\newtheorem*{lemma}{Lemma}
\newtheorem{claim}{Claim}
\begin{document}

\title{Exceptional surgeries on components of two-bridge links}

\author{Kazuhiro Ichihara}
\address{Department of Mathematics, 
College of Humanities and Sciences, 
Nihon University, 3--25--40 Sakurajosui, 
Setagaya, Tokyo 156--8550, Japan}
\email{ichihara@math.chs.nihon-u.ac.jp}

\thanks{The author is partially supported by 
Grant-in-Aid for Scientific Research (No.\ 23740061), 
The Ministry of Education, Culture, Sports, Science and Technology, Japan.}

\date{\today}

\begin{abstract}
In this paper, we give 
a complete classification of exceptional Dehn surgeries on 
a component of a hyperbolic two-bridge link in the 3-sphere.
\end{abstract}

\keywords{Dehn surgery, exceptional surgery, 2-bridge link}

\subjclass[2000]{Primary 57M50; Secondary 57M25}

\maketitle


\section{Introduction}

A Dehn surgery on a link $L$ in a 3-manifold $M$ is defined as 
an operation as; 
take the exterior $E(L)$ of $L$, 
i.e., remove the interior of 
the tubular neighborhood $N(L)$ of $L$ from $M$,  
and then, glue solid tori to $E(L)$. 

One of the motivation to study Dehn surgery is given by 
the fact \cite[Theorem 5.8.2]{Th} due to Thurston: 
On each component of a hyperbolic link, 
there are only finitely many Dehn surgeries 
In view of this, a Dehn surgery on a hyperbolic link 
giving a non-hyperbolic manifold is said to be 
an \textit{exceptional surgery}. 

In the study of exceptional surgery, 
one of the most important problems, 
related to Knot theory, is: 
Completely classify the exceptional surgeries on
hyperbolic links in the 3-sphere $S^3$. 
This seems to be considerably challenging, and 
the problem much easier to tackle is to give 
a complete classification of the exceptional surgeries on 
some class of links. 
Along this line, we consider in this paper 
the hyperbolic \textit{2-bridge links} in $S^3$. 

A link in $S^3$ is called a \textit{2-bridge link} 
if it admits a diagram with exactly two maxima and minima. 
See \cite{Kawauchi} for more details. 
We will follow the definition and notations about 2-bridge link 
from \cite{GodaHayashiSong, YQWu1999}. 
In the following, we denote by {$L_{p/q}$} 
the 2-bridge link associated to a rational number $p/q$.

In this paper, we give 
a complete classification of exceptional surgeries on 
a component of a hyperbolic two-bridge link in $S^3$.

To state our result, we set our notation as follows. 
For a knot $K$ in $S^3$, 
by using a standard meridian-longitude system, 
we have a one-to-one correspondence between 
the set of slopes on the peripheral torus of $K$ and 
the set of rational numbers, 
or equivalently irreducible fractions, with $1/0$. 
See \cite{R} for example. 
Let $L$ be a 2-bridge link. 
We denote $L(r)$ the manifold obtained by Dehn surgery 
on a component of $L$ along the slope $r \in \mathbb{Q}$, 
i.e., the rational number $r$ corresponds to 
the slope determined by the meridian of the attached solid torus. 

We here recall the classification of exceptional surgery on
a component of a hyperbolic link. 
A Dehn surgery on one component of 
a 2-component hyperbolic link is exceptional, 
i.e., it yields a non-hyperbolic 3-manifold with torus boundary, 
if and only if the obtained manifold 
contains an essential disk, annulus, 2-sphere, or torus. 
See \cite{Th} as the original reference.

Now we give our classification theorem as follows. 

\begin{theorem}
Let $L$ be a hyperbolic 2-bridge link in $S^3$ and 
$L(r)$ denote the 3-manifold 
obtained by Dehn surgery on a component of $L$ along the slope $r$. 
Then the following hold. 
\begin{enumerate}
\item 
$L(r)$ contains neither essential disks nor essential 2-spheres.
\item
$L(r)$ contains an essential torus if and only if 
$L$ is equivalent to $L_{[2w,v,2u]}$ and $r=-w-u$ with
\begin{enumerate}
\item
$w = 1, u = - 1 , |v| \ge 2 $, 
\item
$w \ge 2, |u| \ge 2 , |v| = 1 $. 
\item
$w \ge 2, |u| \ge 2 , |v| \ge 2 $. 
\end{enumerate}
In all the cases, $L(r)$ is never Seifert fibered, and  
$L(r)$ gives a graph manifold if and only if 
the parameters $u,v,w$ satisfies the first and the second conditions. 
\item
$L(r)$ contains an essential annulus, 
but contains no essential tori, 
equivalently $L(r)$ is a small Seifert fibered space 
if and only if 
$L$ is equivalent to 
\begin{enumerate}
\item
$L_{[3,2u+1]}$ and $r=u $,
\item
$L_{[2w+1,3]}$ and $r=-w-1 $,
\item
$L_{[3,-3]}$ and $r=-1$, or, 
\item
$L_{[2w+1,2u+1]}$ and $r=-w+u $
\end{enumerate}
with $w \ge 1, u \ne 0 , -1 $.
\end{enumerate}
\end{theorem}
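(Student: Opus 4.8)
The plan is to combine the structure of the Dehn filling with the classification of essential surfaces in $2$-bridge link exteriors. Write $L = K_1 \cup K_2$, with the surgery performed on $K_1$; since every component of a $2$-bridge link is unknotted, $V := S^3 \setminus \mathrm{int}\,N(K_1)$ is a solid torus with $K_2 \subset V$, and $L(r)$ is the Dehn filling of the hyperbolic two-cusped manifold $E(L)$ along the slope $r$ on $\partial N(K_1)$; equivalently, writing $r = p/q$, the manifold $L(r)$ is the exterior of the $1$-bridge knot $K_2$ in the lens space $S^3_r(K_1) \cong L(p,q)$. By the recalled characterization of exceptional surgeries it suffices to decide, for each $r$, whether $L(r)$ contains an essential disk, $2$-sphere, annulus or torus. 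The two inputs I rely on are: that $E(L)$ is hyperbolic, hence irreducible, atoroidal, $\partial$-irreducible and without closed essential surface; and that, by the work of Hatcher--Thurston and of Floyd--Hatcher, the properly embedded incompressible and $\partial$-incompressible surfaces in $E(L)$ --- in particular those with meridional boundary on $\partial N(K_1)$ --- are explicitly enumerated in terms of the continued fraction expansion $[a_1,\dots,a_n]$ of $p/q$.

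\textbf{Part (1).} To rule out essential disks and $2$-spheres I would use a standard filling argument. If $L(r)$ were reducible or $\partial$-reducible, then, after isotoping a reducing sphere, respectively a compressing disk for $\partial N(K_2)$, to meet the core $\gamma$ of the filling solid torus minimally, cutting along $\gamma$ yields a planar surface $\widehat S$ in $E(L)$ that is incompressible and not meridionally $\partial$-compressible and whose boundary on $\partial N(K_1)$ consists of parallel copies of the slope $r$. As $E(L)$ is irreducible, $\partial$-irreducible and carries no closed essential surface, $\widehat S$ must appear on the Floyd--Hatcher list, and a direct inspection of that list shows that none of its planar surfaces caps off to a reducing sphere or to a $\partial$-compressing disk after any filling; equivalently, no surgery on a component of a $2$-bridge link is reducible or $\partial$-reducible. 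This gives (1).

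\textbf{The ``if'' directions of (2) and (3).} These I would prove constructively, family by family, recognizing $L(r)$ by Rolfsen and Kirby calculus. For $L_{[2w,v,2u]}$ with $r = -w-u$, isotoping the surgery curve through the rational tangle and blowing down displays $K_2$ inside $L(p,q)$ as a satellite with an explicit companion torus, which is the required essential torus; computing the two sides one finds that both are Seifert fibered in cases (a) and (b), so $L(r)$ is a graph manifold, while in case (c) one side contains a hyperbolic piece, and in all three cases the Seifert fibrations of the pieces fail to match along the torus, so $L(r)$ is never Seifert fibered. For the families in (3) the same calculus identifies $L(r)$ directly as a Seifert fibered space over the disk with two exceptional fibers --- over the M\"obius band with one exceptional fiber in the case $L_{[3,-3]}$, $r=-1$ --- that is, as a small Seifert fibered space, which is atoroidal and contains the evident vertical essential annulus; the constraints $w \ge 1$ and $u \ne 0,-1$ are exactly what keeps $L$ hyperbolic and the relevant fibers genuinely exceptional.

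\textbf{The ``only if'' direction.} The main point, and the step I expect to be the principal obstacle, is that the listed families exhaust the toroidal and small-Seifert-fibered fillings. Given an essential torus, respectively annulus, $S$ in $L(r)$, isotope it to meet $\gamma$ transversely and minimally and put $\widehat S = S \cap E(L)$; then $\widehat S$ is an incompressible, meridionally $\partial$-incompressible punctured torus, respectively planar surface, all of whose boundary curves on $\partial N(K_1)$ have slope $r$, plus (in the annulus case) two curves on $\partial N(K_2)$. Matching $\widehat S$ against the Floyd--Hatcher enumeration while bookkeeping Euler characteristic and boundary slopes forces $n \le 3$ and pins down the entries $a_i$ up to the allowed moves, after which computing the slope that caps $\widehat S$ off to an essential torus or annulus returns exactly the values $-w-u$, $u$, $-w-1$, $-1$, $-w+u$ of the statement. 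The delicate points absorbing most of the work are: bounding the number of punctures of $\widehat S$ (equivalently the geometric intersection number $|S \cap \gamma|$), where smallness of $E(L)$ and a standard intersection-number and Euler-characteristic estimate enter; recognizing, case by case, the Seifert structure and base orbifold of $L(r)$ or of its JSJ pieces, so as to separate the genuinely toroidal fillings from the small Seifert fibered ones and from the remaining hyperbolic ones; and verifying in the toroidal cases that $L(r)$ admits no global Seifert fibration, which reduces to showing that the fiberings of the two JSJ pieces have distinct slopes on the separating torus.
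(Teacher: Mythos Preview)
Your high-level strategy---pull an essential surface in $L(r)$ back to a properly embedded essential surface $\widehat S$ in $E(L)$ and match it against the Floyd--Hatcher classification---is the paper's strategy too, but there is one genuine gap and, separately, the paper takes a different route in two places.

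The gap concerns meridional incompressibility. The Floyd--Hatcher list enumerates only \emph{meridionally incompressible} surfaces, and the surface $\widehat S$ you extract from an essential sphere or torus in $L(r)$ is not automatically of this type; minimality of $|S\cap\gamma|$ gives ordinary incompressibility but not meridional incompressibility. The paper addresses this by proving a dedicated lemma (its Section~2): a meridionally incompressible essential planar surface in $E(L)$ with at most two meridional boundaries on one cusp and nonempty boundary on the other exists only for $L_{[2,n,-2]}$, $|n|\ge 2$, and is then the obvious twice-punctured disk. This lemma is exactly what closes the meridionally compressible subcase in both the sphere and the torus arguments, and it is what singles out case~(2)(a) ($w=1$, $u=-1$, i.e.\ $L_{[2,n,-2]}$ with $r=0$): that family comes from a meridionally \emph{compressible} once-punctured torus and is analyzed separately from the generic $L_{[2w,v,2u]}$ picture, including a direct verification that $L_{[2,n,-2]}(0)$ is a graph manifold and not Seifert fibered. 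Your sketch does not separate these cases and cannot reach the Floyd--Hatcher list without this step.

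The methodological difference is in parts~(1) and~(3), where the paper avoids the direct surface bookkeeping you anticipate as the hard step. For essential disks it does not inspect any list: it shows $L(r)$ would have to be a solid torus (using the already-established absence of essential spheres), then applies Gabai's theorem on knots in solid tori with solid-torus surgeries together with a result of Miyazaki--Motegi to force $K_1$ to be knotted in $S^3$, contradicting that $2$-bridge link components are unknotted. For the annulus case it argues indirectly: if $L(r)$ is small Seifert fibered, then by work of Saito the boundary slope on $\partial N(K_2)$ of the vertical annulus is non-meridional, so $K_2$ is a nontrivial non-core torus knot in a lens space; a suitable further filling of $K_2$ is then reducible, Wu's theorem on arborescent links forces the continued fraction of $L$ to have length two, and Theorem~11.1 of Goda--Hayashi--Song delivers exactly the list in~(3). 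These indirect arguments bypass the Euler-characteristic and puncture-count estimates you flagged as delicate; your direct approach may be feasible, but it is not what the paper does and would require substantially more work than you indicate.
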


This theorem will be proved in the last section. 
As a preliminary, we will give a key lemma in the next section.

\medskip

We here recall the known results 
on exceptional surgeries on hyperbolic 2-bridge links. 
These are the motivation of our study, and actually 
our proof of the theorem heavily due to the following.

On hyperbolic 2-bridge knots, 
Brittenham and Wu gave in \cite{BW} 
a complete classification of exceptional surgeries. 
For example, they showed that 
only 2-bridge knots {$K_{[b_1,b_2]}$} admits exceptional surgeries.
Here, by $[a_1, a_2, \cdots,a_n]$, 
we mean a continued fraction expansion 
following \cite{GodaHayashiSong}. 

For 2-bridge links, it follows from the result obtained 
by Wu in \cite{YQWu1999}: 
If a 3-manifold obtained by a Dehn surgery
on a component of a 2-bridge link $L$ 
contains an essential disk, annulus, or 2-sphere, 
then $L$ is equivalent to {$L_{[b_1,b_2]}$}. 
Recall that an embedded disk, annulus, 2-sphere 
in a 3-manifold is called \textit{essential} 
if it is incompressible and not boundary-parallel. 
We remark that 
Dehn surgery on a hyperbolic link 
yielding 3-manifolds with essential 
disk, annulus, or 2-sphere, is 
a typical example of exceptional surgery. 

Further, in \cite{GodaHayashiSong}, 
Goda, Hayashi and Song obtained 
a complete classification (resp. a necessary condition) 
of 2-bridge links on a component of which 
a Dehn surgery yields 
a non-trivial, non-core torus knot exterior
or a cable knot exterior 
(resp. a prime satellite knot exterior) in a lens space.


\section{Surfaces in 2-bridge link exterior}

To prove our theorem, a key investigation is to study 
essential surfaces embedded in 2-bridge link exteriors 
of genus at most one. 
Most parts of such studies have been achieved 
in \cite{GodaHayashiSong}, 
which is based on the machinery of \cite{FloydHatcher}. 
In this section, we give a lemma 
which concerns the remaining cases of \cite{GodaHayashiSong}.

\begin{lemma}\label{lemma}
If a hyperbolic 2-bridge link exterior 
contains a meridionally incompressible essential planer surface $F$
with at most two meridional boundaries on a component of the link 
and non-empty boundary on the other component 
if and only if 
the link is equivalent to $L_{[2,n,-2]}$ with $|n| \ge 2$ 
and $F$ is an essential two punctured disk 
with two meridional punctures on a component on the link 
and a single longitudinal boundary on the other component. 
\end{lemma}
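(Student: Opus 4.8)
The plan is to treat the two implications separately, using the Floyd--Hatcher parametrization of essential surfaces in 2-bridge link exteriors (as organized in \cite{GodaHayashiSong}, which rests on \cite{FloydHatcher}) as the main engine, and to reserve a hands-on cut-and-paste argument for the meridional incompressibility, which is the hypothesis that actually makes the answer small.

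For the \emph{if} direction I would argue by explicit construction. Write $L=L_{[2,n,-2]}$ with $|n|\ge 2$ and label its components $K_1,K_2$ so that $K_1$ is to carry the two meridional punctures. Since every component of a 2-bridge link is unknotted, $K_2$ bounds a disk $D$ in $S^3$; reading off the standard $4$-plat diagram for $[2,n,-2]$ one sees that $D$ may be chosen to meet $K_1$ transversely in exactly two points. Then $F:=D\cap E(L)$ is a twice-punctured disk with a single longitudinal boundary on $K_2$ and two meridional boundaries on $K_1$. I would then check: (i) $F$ is carried by the Floyd--Hatcher branched surface, so that its incompressibility and $\partial$-incompressibility follow from that theory; (ii) $F$ is not boundary-parallel, which uses that $n$ is genuinely nonzero; and (iii) $F$ is meridionally incompressible, by analyzing a hypothetical meridional compressing disk together with its intersection with $F$, using hyperbolicity of $L$ (equivalently $|n|\ge 2$) to rule out the degenerate subcases.

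For the \emph{only if} direction, suppose $E(L)$ contains a meridionally incompressible essential planar surface $F$ with at most two meridional boundary components on $K_1$ and nonempty boundary on $K_2$. Being essential, $F$ is isotopic to a surface carried by the Floyd--Hatcher branched surface of $L$, i.e., described by a minimal edge-path in the associated $1$-complex, and the number and slopes of the boundary components of $F$ can be read off from this edge-path together with the continued fraction expansion of $L$. The hypothesis that $F$ is planar with at most two boundary curves on $K_1$ bounds the complexity of the edge-path, and hence restricts the continued fraction of $L$ to a short explicit list. Most of this list is subsumed in the genus-$\le 1$ computation already carried out in \cite{GodaHayashiSong}, where the corresponding surfaces either fail to exist, fail to be meridionally incompressible, or carry the wrong boundary pattern; the finitely many continued fractions not treated there I would run through with the same edge-path analysis, and, after imposing meridional incompressibility and hyperbolicity of $L$ (so the expansion does not collapse and $|n|\ge 2$), only $[2,n,-2]$ survives, with $F$ forced to be the twice-punctured disk constructed above.

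I expect the main obstacle to be the bookkeeping in the \emph{only if} direction: converting ``planar, at most two meridional boundaries on $K_1$, boundary on $K_2$'' into a sharp constraint on the edge-path, and then verifying that it is the meridional-incompressibility hypothesis, not mere incompressibility, that eliminates every borderline edge-path, including those that would otherwise yield essential annuli or more highly punctured disks. Checking meridional incompressibility is the step most likely to require a genuinely new argument, since the Floyd--Hatcher and \cite{GodaHayashiSong} framework is phrased in terms of ordinary incompressibility and $\partial$-incompressibility; handling it will probably amount to a direct analysis of compressing and meridional-compressing disks for each of the finitely many surviving surfaces.
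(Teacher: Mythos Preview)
Your plan follows the right template (Floyd--Hatcher edge-paths), but it rests on a misconception about where meridional incompressibility enters. Theorem~3.1(a) of \cite{FloydHatcher} is a classification of \emph{meridionally incompressible} essential surfaces, not merely of essential ones: meridional incompressibility is the hypothesis that grants access to the branched-surface/edge-path description, not a filter to be imposed afterwards. Thus your anticipated ``main obstacle''---checking meridional incompressibility case by case after producing a finite list---does not arise in the only-if direction; the hypothesis is used exactly once, at the outset, to invoke \cite{FloydHatcher}.

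Consequently the paper's only-if argument is much shorter than what you outline. Once $F$ is carried by $\Sigma_\gamma$ for a minimal edge-path $\gamma$, the boundary pattern (meridional on one component only, nonempty on the other) forces $\gamma$ to lie in the diagram $D_\infty$ and to consist solely of edges labelled $B$ or $D$. Since the link has even $q$, any $B$-edges must occur in pairs; but a pair of $B$-edges forces positive genus, contradicting planarity, so $\gamma$ consists only of $D$-edges. Each $D$-edge contributes at least one meridional boundary component, so the hypothesis ``at most two meridional boundaries'' bounds the length of $\gamma$ by two. Length one yields $L_{1/2}$, which is non-hyperbolic; length two yields exactly $L_{[2,n,-2]}$ with $|n|\ge 2$. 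No auxiliary case list and no appeal to the genus-$\le 1$ tables of \cite{GodaHayashiSong} are needed---only the shapes of the $B$- and $D$-type sub-branched surfaces.

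For the if direction the paper is also simpler than your (i)--(iii): instead of verifying incompressibility, non-parallelism, and meridional incompressibility separately, it observes that compressing or $\partial$-compressing the obvious twice-punctured spanning disk would produce a meridionally incompressible essential planar surface with a \emph{single} meridional boundary on one component, which the only-if analysis has just shown cannot exist when $L$ is hyperbolic.
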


Here a surface $F$ in $E(L)$ 
is called \textit{meridionally incompressible} 
if, for any disk $D \subset S^3$ with $D \cap F = \partial D$ 
and $D$ meeting $L$ transversely in one point in the interior of $D$, 
there is a disk $D' \subset F \cup L$ with $\partial D' = \partial D$, 
$D'$ also meeting $L$ transversely in one interior point.

Actually, in \cite{FloydHatcher}, Floyd and Hatcher studied 
meridionally incompressible essential surfaces in 2-bridge link exteriors, 
and gave a complete description of such surfaces. 
See \cite{FloydHatcher} and \cite{GodaHayashiSong} for details. 
In the following, we assume that the readers are familiar 
to a certain extent.

\begin{proof}[Proof of Lemma]

Let $L = K_1 \cup K_2$ be 
a hyperbolic 2-bridge link in $S^3$, and $E(L)$ its exterior. 

Suppose that there exists 
a meridionally incompressible essential planer surface $F$ in $E(L)$ 
with at most two meridional boundaries on a component of the link, 
say $K_2$, 
and non-empty boundary on the other component $K_1$. 
Then, by \cite[Theorem 3.1 (a)]{FloydHatcher}, 
the surface $F$ is carried by 
a branched surface $\Sigma_\gamma$ 
for some minimal edge-path $\gamma$ 
in the diagram $D_t$ in \cite{FloydHatcher}. 
See also \cite{GodaHayashiSong}. 

Since $F$ has 
meridional boundaries only on $\partial K_2$, 
we see that the minimal edge-path $\gamma$ is in $D_\infty$. 
Moreover, observing the sub-branched surfaces corresponding to 
the edges in the diagram 
depicted in \cite[Figure 3.1]{FloydHatcher} and \cite[Figure 4]{GodaHayashiSong}, 
the edge-path $\gamma$ must consist of 
edges labeled by $B$ or $D$ only. 

Note that the edge-path $\gamma$ connects $1/0$ to $p/q$, 
where $q$ must be even since $L$ is a 2-bridge link, and 
an edge labeled by $D$ can connect the two vertices 
with even denominators. 
Thus, if $\gamma$ contains edges labeled by $B$,  
the edges labeled by $B$ appears in pairs. 
However, 
by observing the shape of the sub-branched surface 
corresponding to the edge labeled by $B$, 
if $\gamma$ contains edges labeled by $B$ in pairs, 
then $F$ would have positive genus, contradicting 
the assumption that $F$ is planer. 
It concludes that the edge-path $\gamma$ consists of 
only edges labeled by $D$. 

Moreover, 
by observing the shape of the sub-branched surface 
corresponding to the edge labeled by $D$, 
the number of meridional boundary components are 
at least the number of the edges labeled by $D$ in $\gamma$. 
Since we are assuming that $F$ has 
at most two meridional boundaries on $\partial N(K_2)$, 
it follows that the length of $\gamma$ is at most two. 

If $\gamma$ is of length one, then $L$ must be equivalent to 
$L_{1/2}$ which is non-hyperbolic, 
contradicting the assumption that $L$ is hyperbolic. 

If $\gamma$ is of length two, then 
the slopes $p/q$ and 1/2 has distance two, 
and so $L$ must be equivalent to 
$L_{[2,n,-2]}$ with some non-zero integer $|n| \ge 2$. 

Conversely, 
if $L$ is equivalent to $L_{[2,n,-2]}$ with $|n| \ge 2$, 
then we can find a two-punctured disk 
naturally spanned by $K_1$. 
It is incompressible or boundary-incompressible, 
otherwise, after compression or boundary-compression, 
we can find  
a meridionally incompressible essential planer surface in $E(L)$ 
with single boundary on $\partial K_2$, 
contradicting that $L$ is hyperbolic in the same way as above. 

This completes the proof. 

\end{proof}


\section{Proof}

In this section, we give a proof of our theorem.

\begin{proof}[Proof of Theorem]

Let $L=K_1 \cup K_2$ be a hyperbolic 2-bridge link in $S^3$ and 
$L(r)$ denote the 3-manifold 
obtained by Dehn surgery on $K_1 \subset L$ along the slope $r$. 
Note that, since the component $K_2$ remains unfilled, 
$L_(r)$ has a torus boundary. 
Also note that it is known by \cite{Menasco} 
that $L$ is hyperbolic if and only if 
$L$ is not equivalent to $L_{1/n}$ for some integer $n$. 

Now suppose that $L(r)$ is non-hyperbolic. 
Then, as remarked before, 
$L (r)$ contains an essential disk, sphere, annulus or torus.

In the following, we give our proof of the theorem 
divided into four claims. 

\begin{claim}\label{claim_sphere}
There are no essential sphere in $L (r)$. 
\end{claim}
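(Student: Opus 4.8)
The plan is to argue by contradiction. Suppose $L(r)$ contains an essential $2$-sphere, so that $L(r)$ is reducible. Since a $2$-bridge link is non-split, $E(L)$ is irreducible, and since $L$ is hyperbolic, $E(L)$ is $\partial$-irreducible and anannular. Writing $L(r) = E(L) \cup_{T_1} V$ with $T_1 = \partial N(K_1)$ and $V$ the filling solid torus, I would isotope a reducing sphere $S$ to meet $V$ in a minimal, hence positive, family of meridian disks; then $P := S \cap E(L)$ is a planar surface properly embedded in $E(L)$ whose boundary lies on $T_1$ with every component of slope $r$, and the usual innermost-disk and minimality arguments let us take $P$ essential (incompressible, $\partial$-incompressible, not $\partial$-parallel). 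Note that $r$ is not the meridian $\mu_1$ of $K_1$: every component of a $2$-bridge link is unknotted, so if $r = \mu_1$ then $L(r) = S^3 \setminus N(K_2)$ is a solid torus, which is irreducible.

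The heart of the argument is that $P$ is meridionally incompressible. A meridional compression along $K_2$ would be along a disk $D$ meeting $L$ once on $K_2$ (hence missing $K_1$); after an isotopy it becomes a disk in the closed manifold $Y := L(r) \cup N(K_2) = E(K_1)(r)$, i.e.\ $r$-surgery on the unknot $K_1$, a lens space (or $S^2 \times S^1$ when $r = 0$). There $S$ bounds a ball $B$, and the core $\kappa$ of $N(K_2)$ lies in $\mathrm{int}\,B$ — otherwise $B \subset L(r)$ and $S$ would already bound a ball in $L(r)$. But $D$ is then a disk properly embedded in $B$ meeting the circle $\kappa$ once, which is impossible since such a disk is separating in $B$. (The case $Y = S^2 \times S^1$ with $S$ non-separating is handled separately: it forces $K_2$ into a ball, contradicting non-splitness.) A meridional compression along $K_1$, along a disk $D$ with $\partial D$ essential in $P$, produces after gluing the disk $D_S \subset S$ bounded by $\partial D$ to the annulus $D \cap E(L)$ an embedded disk $D'$ in $L(r)$ with $\partial D' = \mu_1$ and $\mathrm{int}\,D'$ meeting $T_1$ in curves of slope $r$; since $r \neq \mu_1$ these cannot be disjoint from $\mu_1$ on $T_1$, forcing $D' \subset E(L)$, against $\partial$-irreducibility. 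Hence $P$ is meridionally incompressible.

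Finally I would derive a contradiction from the existence of such a $P$, following the proof of the Lemma. By Floyd--Hatcher, $P$ is carried by a branched surface $\Sigma_\gamma$ for a minimal edge-path $\gamma$ from $1/0$ to $p/q$ (where $L = L_{p/q}$); since $P$ has no meridional boundary at all — none on $K_2$, and slope $r \neq \mu_1$ on $K_1$ — the same inspection of the sub-branched surfaces forces $\gamma$ to consist only of $D$-labelled edges, whose number is bounded by the number of meridional boundaries of $P$ on $K_2$, i.e.\ zero. So $\gamma$ has length zero, $1/0 = p/q$, and $L$ is trivial — a contradiction. Equivalently, since $P$ has zero (hence at most two) meridional boundaries on $K_2$ and non-empty boundary on $K_1$, the Lemma would force $L = L_{[2,n,-2]}$ and $P$ to be a twice-punctured disk with two meridional punctures on $K_2$, contradicting that $P$ has no boundary on $K_2$ (one could also invoke \cite{YQWu1999} to reduce to $L = L_{[b_1,b_2]}$ at the outset and rule out the coincidences with $L_{[2,n,-2]}$ by the same boundary count). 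This proves the claim.

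The most delicate part is the meridional-incompressibility step — the parity argument in $Y$, the $r = 0$ case, and positioning the compressing disks — together with the Floyd--Hatcher input that a surface with no boundary on one component cannot be carried by a nontrivial edge-path; the remaining Dehn-surgery bookkeeping is routine.
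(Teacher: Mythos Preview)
Your overall outline matches the paper---pass to an essential planar surface $P\subset E(L)$ with boundary on $\partial N(K_1)$ only, then feed it to the Floyd--Hatcher machinery---but the step you single out as ``the heart of the argument'' is exactly where the proposal breaks. The meridional compressing disk $D$ along $K_2$ lives in $S^3$, not in $Y$; to view it inside $Y=E(K_1)(r)$ you must first isotope $D$ off $N(K_1)$. Your innermost-circle elimination fails when the innermost subdisk $D_1\subset E(K_1)$ contains the single puncture by $K_2$: then $D_1\cup(\text{disk on }\partial N(K_1))$ would be a sphere meeting $K_2$ once, so $\partial D_1$ is forced to be essential (a longitude of $K_1$), and you cannot remove it. Even granting $D\subset Y$, you have not argued $D\subset B$: $D\cap P=\partial D$ does not prevent $D$ from meeting the meridian disks of $V$ used to cap $P$ off to $S$, so $D$ need not lie on one side of $S$. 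Your edge-path analysis is also misapplied: the reasoning in the Lemma (only $D$-labelled edges, because $B$-pairs force positive genus) is for surfaces with meridional punctures on $K_2$; for a planar surface with boundary only on $\partial N(K_1)$ the relevant input is \cite[Lemma 12.1]{GodaHayashiSong}, which gives a length-two path of $B$-labelled edges and hence $L\cong L_{\pm 1/m}$, contradicting hyperbolicity.

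The paper sidesteps all of this by \emph{not} proving $P$ is meridionally incompressible. It simply splits into two cases: if $P$ is meridionally incompressible, invoke \cite[Lemma 12.1]{GodaHayashiSong} as above; if $P$ is meridionally compressible, compress maximally---every compressing curve is separating on the planar surface $P$, so some resulting component is a meridionally incompressible essential planar surface with exactly one meridional boundary on $\partial N(K_2)$ and non-empty boundary on $\partial N(K_1)$, which the Lemma of Section~2 forbids (it forces two meridional boundaries). This two-case split is short and avoids the delicate lens-space parity argument entirely.
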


\begin{proof}
Suppose for a contrary that there exists an essential sphere in $L(r)$. 
Then, by the standard argument, 
the link exterior $E(L )$ contains 
a connected, orientable, essential 
(i.e., incompressible and $\partial$-incompressible), 
properly embedded planer surface $F$. 
The surface $F$ has 
non-empty boundary components on $\partial N(K_1)$ 
with boundary slope $r$ 
and no boundary components on $\partial N(K_2)$. 

First suppose that $F$ is meridionally incompressible. 
Then, again by \cite[Theorem 3.1 (a)]{FloydHatcher}, 
the surface $F$ is carried by 
a branched surface $\Sigma_\gamma$ 
for some minimal edge-path $\gamma$ 
in the diagram $D_t$ in \cite{FloydHatcher}. 
See also \cite{GodaHayashiSong}. 
In this case, we can apply the argument given 
in \cite[Lemma 12.1]{GodaHayashiSong}. 
Then we see that the minimal edge-path $\gamma$ is in $D_\infty$ 
and is composed of only two edges with label $B$ 
with endpoints $1/0$ and $p/q$, 
where $L_{p/q}$ is equivalent to $L$. 
However, as seen in \cite[Figure 1.1]{FloydHatcher} or 
\cite[Figure 2]{GodaHayashiSong}, 
it implies that $L_{p/q}$ is equivalent to $L_{\pm 1/m}$ for some $m$, 
contradicting $L$ is hyperbolic. 

Next suppose that $F$ is meridionally compressible. 
Perform meridional compressions as possible. 
It can be checked by the standard argument 
that meridional compressions preserve essentiality of surfaces. 
Then, since any boundary curve of 
a meridionally compressing disk is separating on $F$, 
there must exist some component which is 
meridionally incompressible essential planer surface 
with single meridional boundary on $\partial N(K_2)$ 
and with non-empty boundaries on $\partial N(K_1)$. 
However, by Lemma in Section 2, 
such a surface must have 
exactly two meridional boundaries on $\partial N(K_2)$. 
A contradiction occurs. 
\end{proof}

\begin{claim}\label{claim_disk}
There are no essential disk in $L (r)$. 
\end{claim}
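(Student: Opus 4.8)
The plan is to argue exactly as in the proof of Claim~\ref{claim_sphere}. Assume $L(r)$ contains an essential disk $D$. Putting the core $c$ of the filling solid torus in transverse, minimal position with respect to $D$ and setting $F:=D\cap E(L)$, the standard argument produces a connected, orientable, essential, properly embedded planar surface $F$ in $E(L)$ whose boundary consists of one curve of some slope $s$ on $\partial N(K_2)$ (the image of $\partial D$) together with $k\ge 1$ curves of slope $r$ on $\partial N(K_1)$. If $k=1$ then $F$ is an annulus joining the two distinct boundary tori of $E(L)$, hence an essential annulus; since a hyperbolic link exterior contains no essential annulus, this is impossible, so $k\ge 2$ from now on.

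The mechanism of the proof is the elementary fact that disjoint essential simple closed curves on a torus are isotopic, so the boundary curves of a properly embedded surface lying on a given boundary torus all carry a single slope. Because $r\in\mathbb Q$ while the meridian $\mu_1$ of $K_1$ is the slope $1/0$, the surface $F$ admits no meridional compression along a disk meeting $K_1$ (such a compression would create a slope-$\mu_1$ boundary curve on $\partial N(K_1)$ that could not be disjoint from the surviving slope-$r$ curves); and $F$ admits a meridional compression along a disk meeting $K_2$ only when $s=\mu_2$.

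Suppose first $s\ne\mu_2$. Then $F$ is meridionally incompressible, has no meridional boundary on $\partial N(K_2)$, and has non-empty boundary on $\partial N(K_1)$, so the Lemma in Section~2 forces $L$ to be $L_{[2,n,-2]}$ and $F$ to be the two-punctured disk described there. But that surface has two meridional boundary curves on one component and a single longitudinal curve on the other, whereas the $\partial N(K_2)$-boundary of $F$ is a single non-meridional curve and its $\partial N(K_1)$-boundary consists of non-meridional curves only; contradiction. Now suppose $s=\mu_2$, so $F$ has exactly one meridional boundary curve on $\partial N(K_2)$. If $F$ is meridionally incompressible the Lemma again identifies $F$ with that two-punctured disk, which is impossible for the same reason. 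Otherwise $F$ is meridionally compressible, and every such compression is along a disk meeting $K_2$; as every compressing curve separates the planar surface, one compression produces two essential planar pieces, one carrying two and one carrying a single meridional boundary curve on $\partial N(K_2)$. Iterating on the piece with a single such curve, and using that meridional compressions terminate and preserve essentiality, one reaches a meridionally incompressible essential planar surface $F^{\ast}$ with exactly one meridional boundary curve on $\partial N(K_2)$. If $F^{\ast}$ has empty boundary on $\partial N(K_1)$ it is a disk in $E(L)$ with boundary $\mu_2$, impossible since $\partial N(K_2)$ is incompressible in $E(L)$; if not, the Lemma makes $F^{\ast}$ the two-punctured disk of $L_{[2,n,-2]}$, contradicting that $F^{\ast}$ has only one meridional boundary curve on $\partial N(K_2)$ and (as $r\ne\mu_1$) none on $\partial N(K_1)$.

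The routine parts — that the surface $F$ extracted from $D$ is genuinely essential (incompressible and, importantly, $\partial$-incompressible toward \emph{both} boundary tori), and that meridional compressions terminate — are the places where care is needed, but they are identical in spirit to the corresponding steps in Claim~\ref{claim_sphere}. I expect the genuine point of the argument to be the repeated use of the Lemma in Section~2 in the two situations above: one must check in each case both that the hypotheses of the Lemma are met and that its conclusion — forcing the surface to be the specific two-punctured disk of $L_{[2,n,-2]}$, which carries two meridional boundary curves on one component — is incompatible with a surface originating from an essential \emph{disk}, whose boundary on $\partial N(K_2)$ is necessarily a single curve. It is precisely this last feature that separates the disk case from the toroidal and annular cases treated later.
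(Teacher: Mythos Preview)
Your approach—extracting an essential planar surface $F$ from the disk and analysing it via the Lemma in Section~2—is quite different from the paper's. The paper instead observes that once $\partial L(r)$ is compressible, irreducibility (Claim~\ref{claim_sphere}) forces $L(r)$ to be a solid torus; then $K_1$ is a knot in the solid torus $E(K_2)$ admitting a nontrivial solid-torus surgery, so by Gabai's theorem it is a $0$- or $1$-bridge braid there, and a result of Miyazaki--Motegi then makes $K_1$ knotted in $S^3$, contradicting the fact that each component of a $2$-bridge link is trivial.

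Your argument has a genuine gap in the case $s\ne\mu_2$. The Lemma in Section~2, as stated and as proved, concerns surfaces whose boundary on one component of $L$ consists \emph{entirely of meridians} (at most two of them): the very first step of its proof uses this to place the carrying edge-path $\gamma$ in the Floyd--Hatcher diagram $D_\infty$. In your Case~1, the surface $F$ carries a single \emph{non}-meridional boundary curve of slope $s$ on $\partial N(K_2)$, and its boundary curves on $\partial N(K_1)$ all have the non-meridional slope $r$; thus $F$ has no meridional boundary whatsoever and the hypothesis of the Lemma is simply not met. Meridionally incompressible essential surfaces with finite boundary slope on both cusps are carried by branched surfaces indexed by edge-paths in a different Floyd--Hatcher diagram, and excluding planar ones of this shape would require a separate analysis that you have not supplied. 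Your treatment of the case $s=\mu_2$ is sound and parallels the paper's proof of Claim~\ref{claim_sphere}, but since nothing forces the boundary of an essential disk in $L(r)$ to be meridional on $\partial N(K_2)$, this does not suffice.
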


\begin{proof}
Suppose for a contrary that there exists an essential disk in $L(r)$. 
It follows that there is a compressible disk for $\partial L(r)$ in $L(r)$. 
By compression, $L(r)$ must be a solid torus. 
Otherwise we would have an essential sphere in $L(r)$ 
contradicting Claim \ref{claim_sphere}.

Then, considering the exterior of $K_2$, 
we can regard $K_1$ as a knot in a handlebody. 
Since the surgery on $K_1$ yields a solid torus again, 
by the result given in \cite{Gabai}, 
$K_1$ is either a 0 or 1-bridge braid in the solid torus $E(K_2)$. 
Then, together with the result of \cite[Proposition 3.2]{MiyazakiMotegi}, 
$K_1$ must be knotted in $S^3$. 
This contradicts that $L$ is a 2-bridge link. 
\end{proof}

\begin{claim}\label{claim_torus}
There exists an essential torus in $L(r)$ 
if and only if 
$L$ is equivalent to $L_{[2w,v,2u]}$ and $r=-w-u$ with
\begin{enumerate}
\item
$w = 1, u = - 1 , |v| \ge 2 $, 
\item
$w \ge 2, |u| \ge 2 , |v| = 1 $. 
\item
$w \ge 2, |u| \ge 2 , |v| \ge 2 $. 
\end{enumerate}
In all the cases, $L(r)$ is never Seifert fibered, and  
$L(r)$ gives a graph manifold if and only if 
the parameters $u,v,w$ satisfies the first and the second conditions. 
\end{claim}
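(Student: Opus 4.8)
The plan is to analyze when the filled manifold $L(r)$ contains an essential torus by pulling this torus back to an essential (punctured) torus $F$ in the link exterior $E(L)$ with all boundary components on $\partial N(K_1)$ having slope $r$, together with possibly some meridional boundary on $\partial N(K_2)$. As in the proof of Claim \ref{claim_sphere}, I would first reduce to the meridionally incompressible case: if $F$ is meridionally compressible, perform meridional compressions (which preserve essentiality), and if the result contains tori we continue with those, while if compression yields only planar pieces they must be the two-punctured disks of the Lemma, which forces $L \simeq L_{[2,n,-2]}$ and leads to a manifold one then checks directly. So the bulk of the argument concerns a meridionally incompressible essential surface $F$ of genus $\le 1$ carried by a branched surface $\Sigma_\gamma$ over a minimal edge-path $\gamma$ in the Floyd--Hatcher diagram, and I would invoke the relevant enumeration from \cite{GodaHayashiSong} (Sections on genus-one surfaces) together with the Lemma of Section 2 to cut down to the surviving cases.

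The key computational step is then to identify, among the edge-paths $\gamma$ that carry a once-punctured or twice-punctured torus with the puncture structure described above, exactly which 2-bridge links arise. Here the labels $B$ and $D$ on the edges again control the topology: a pair of $B$-labelled edges contributes genus, a $D$-labelled edge contributes a meridional boundary on $K_2$, and the remaining edge types contribute to the $K_1$-boundary. Requiring genus exactly one and at most one or two meridional punctures on $K_2$ pins $\gamma$ down to a short list, and translating "the endpoints of $\gamma$ are $1/0$ and $p/q$ with prescribed combinatorics" into continued-fraction language yields $L \simeq L_{[2w,v,2u]}$. The boundary slope of $F$ on $\partial N(K_1)$, read off from $\Sigma_\gamma$, is computed to be $-w-u$, giving $r = -w-u$. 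The three sub-cases (a), (b), (c) correspond to degenerate versus generic ranges of $w,u,v$: small $|w|$ or $|v|=1$ collapses part of the branched surface, and I would treat $w=1,u=-1$ and $|v|=1$ separately because in those ranges the JSJ pieces differ.

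For the converse and for the structural claims, I would exhibit the essential torus explicitly for each of (a)--(c): cutting $E(L)$ along $F$ and re-gluing the surgery solid torus, one sees that $L(r)$ decomposes along this torus into two pieces, and I would identify each piece. In cases (a) and (b) both pieces are Seifert fibered (the degenerate parameters make one side a cable space or a torus-knot exterior and the other a small Seifert piece), so $L(r)$ is a graph manifold; in case (c) at least one piece is hyperbolic (it is the exterior of a hyperbolic knot or a more complicated Haken piece), so $L(r)$ is not a graph manifold but still toroidal. To rule out $L(r)$ being Seifert fibered in all cases, I would check that the torus $F$ is not isotopic to a vertical or horizontal torus in any Seifert structure --- equivalently, that the two JSJ pieces are genuinely distinct and cannot be amalgamated into a single Seifert fibration, which follows from comparing their fiber slopes on the common torus. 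The main obstacle I anticipate is the bookkeeping in the branched-surface analysis: correctly reading off both the genus contribution and the $K_1$-slope from $\gamma$, and not missing a borderline edge-path, so I would lean heavily on the tables in \cite{GodaHayashiSong} and \cite{FloydHatcher} and cross-check the slope formula $r=-w-u$ against the known Brittenham--Wu list \cite{BW} in the degenerate case where $K_2$ becomes unknotted.
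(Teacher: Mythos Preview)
Your proposal follows essentially the same approach as the paper: pull the torus back to an essential genus-one surface $F$ in $E(L)$, split into the meridionally incompressible case (handled via the Floyd--Hatcher edge-path machinery and \cite[Lemma 12.1, Theorem 1.5]{GodaHayashiSong}, yielding $L_{[2w,v,2u]}$ with $r=-w-u$) and the meridionally compressible case (handled via the Lemma of Section~2, yielding $L_{[2,n,-2]}$), and then identify the JSJ pieces to decide the Seifert/graph-manifold dichotomy. The only tactical difference is that the paper verifies the converse in the generic case by a Kirby-calculus band sum exhibiting $L_{[2w,v,2u]}(-w-u)$ directly as a satellite-knot exterior in a lens space (with hyperbolic pattern when $|v|\ge 2$ and cable pattern when $|v|=1$), rather than by cutting along $F$; also note that the initial pulled-back $F$ has \emph{no} boundary on $\partial N(K_2)$, so your remark about ``possibly some meridional boundary on $K_2$'' and ``if the result contains tori we continue with those'' is slightly off --- meridional punctures on $K_2$ appear only \emph{after} compression, and compressing a genus-one $F$ never produces new tori.
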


\begin{proof}
Suppose that 
there exists an essential torus in $L (r)$. 

As seen in the proof of Claim \ref{claim_sphere}, 
the link exterior $E(L )$ contains 
a connected, orientable, essential 
properly embedded surface $F$ of genus one 
with non-empty boundaries on $\partial N(K_1)$ 
with boundary slope $r$ 
and no boundary components on $\partial N(K_2)$. 

First suppose that $F$ is meridionally incompressible. 
Then, by \cite[Theorem 3.1 (a)]{FloydHatcher}, 
the surface $F$ is carried by 
a branched surface $\Sigma_\gamma$ 
for some minimal edge-path $\gamma$ 
in the diagram $D_t$ in \cite{FloydHatcher}. 
See also \cite{GodaHayashiSong}. 
Again we can apply the argument given 
in \cite[Lemma 12.1]{GodaHayashiSong}. 
Then, in this case, $\gamma$ has length 4 in $D_\infty$ 
with endpoints $1/0$ and $p/q$, 
where $L_{p/q}$ is equivalent to $L$. 
As claimed in 
the proof of \cite[Theorem 1.5]{GodaHayashiSong}, 
$L_{p/q}$ must be equivalent to 
$L_{[2w,v,2u]}$ with $w \ge 2, |v| \ge 1, |u| \ge 2$. 

It remains to show that 
$L_{[2w,v,2u]}$ actually contains essential torus 
for $w \ge 2, |v| \ge 1, |u| \ge 2$. 
%
%
We here imitate the arguments used in 
the proofs of \cite[Theorem 5.1]{YQWu1999} and 
\cite[Theorem 11.1]{GodaHayashiSong}. 
By performing a band sum of $K_2$ and the curve parallel to 
the one on $\partial N(K_1)$ with slope $r = -w-u$, 
equivalently, using a Kirby move on the framed knot $(K_1,r)$, 
it can be checked directly from the illustration 
that the surgered manifold $L_{[2w,v,2u]} (r)$ 
is homeomorphic to the exterior of a satellite knot 
with a torus knot as a companion in a lens space. 
See Figure \ref{Fig1}. 

\begin{figure}[htb]
\unitlength 0.1in
\begin{picture}( 52.0000, 24.0000)(  2.0000,-34.0000)
%
\special{pn 8}%
\special{ar 800 2200 600 1200  1.5707963 4.7123890}%
%
\special{pn 8}%
\special{ar 4800 2200 600 1200  4.7123890 6.2831853}%
\special{ar 4800 2200 600 1200  0.0000000 1.5707963}%
%
\special{pn 8}%
\special{pa 4800 1000}%
\special{pa 800 1000}%
\special{fp}%
\special{pa 800 3400}%
\special{pa 4800 3400}%
\special{fp}%
%
\special{pn 8}%
\special{ar 4000 2200 610 160  0.0000000 6.2831853}%
%
\special{pn 8}%
\special{ar 1800 2200 610 160  0.0000000 6.2831853}%
%
\special{pn 20}%
\special{pa 800 3200}%
\special{pa 1200 3200}%
\special{fp}%
\special{pa 1200 3200}%
\special{pa 1400 2320}%
\special{fp}%
\special{pa 1600 3400}%
\special{pa 1800 2360}%
\special{fp}%
%
\special{pn 20}%
\special{pa 2000 3400}%
\special{pa 2200 2320}%
\special{fp}%
\special{pa 2400 3400}%
\special{pa 2600 3200}%
\special{fp}%
\special{pa 2600 3200}%
\special{pa 3400 3200}%
\special{fp}%
\special{pa 3400 3200}%
\special{pa 3600 2320}%
\special{fp}%
\special{pa 3800 3400}%
\special{pa 4000 2370}%
\special{fp}%
\special{pa 4200 3400}%
\special{pa 4400 2330}%
\special{fp}%
\special{pa 4600 3400}%
\special{pa 4800 3200}%
\special{fp}%
\special{pa 4800 3200}%
\special{pa 5200 2200}%
\special{fp}%
\special{pa 5200 2200}%
\special{pa 4800 1200}%
\special{fp}%
\special{pa 4800 1200}%
\special{pa 800 1200}%
\special{fp}%
\special{pa 800 1200}%
\special{pa 400 2200}%
\special{fp}%
\special{pa 400 2200}%
\special{pa 800 3200}%
\special{fp}%
%
\special{pn 8}%
\special{pa 880 1320}%
\special{pa 540 2190}%
\special{fp}%
\special{pa 540 2190}%
\special{pa 900 3090}%
\special{fp}%
\special{pa 900 3090}%
\special{pa 1090 3090}%
\special{fp}%
\special{pa 1090 3090}%
\special{pa 1290 2290}%
\special{fp}%
\special{pa 1490 3400}%
\special{pa 1690 2360}%
\special{fp}%
\special{pa 1890 3400}%
\special{pa 2090 2340}%
\special{fp}%
\special{pa 2290 3400}%
\special{pa 2600 3090}%
\special{fp}%
\special{pa 2600 3090}%
\special{pa 3320 3090}%
\special{fp}%
\special{pa 3320 3090}%
\special{pa 3530 2300}%
\special{fp}%
\special{pa 3700 3400}%
\special{pa 3900 2360}%
\special{fp}%
\special{pa 4090 3400}%
\special{pa 4290 2340}%
\special{fp}%
\special{pa 4460 3400}%
\special{pa 4660 3200}%
\special{fp}%
\special{pa 4660 3200}%
\special{pa 5060 2200}%
\special{fp}%
\special{pa 5060 2200}%
\special{pa 4710 1320}%
\special{fp}%
\special{pa 4710 1320}%
\special{pa 880 1320}%
\special{fp}%
%
\special{pn 8}%
\special{pa 1290 2290}%
\special{pa 1490 3400}%
\special{dt 0.045}%
\special{pa 1690 2360}%
\special{pa 1890 3400}%
\special{dt 0.045}%
\special{pa 2090 2340}%
\special{pa 2290 3400}%
\special{dt 0.045}%
\special{pa 3530 2310}%
\special{pa 3700 3400}%
\special{dt 0.045}%
\special{pa 3900 2360}%
\special{pa 4100 3400}%
\special{dt 0.045}%
\special{pa 4300 2340}%
\special{pa 4470 3400}%
\special{dt 0.045}%
%
\special{pn 13}%
\special{pa 1400 2330}%
\special{pa 1600 3400}%
\special{dt 0.045}%
\special{pa 1470 2700}%
\special{pa 1470 2700}%
\special{dt 0.045}%
%
\special{pn 13}%
\special{pa 1800 2360}%
\special{pa 2000 3400}%
\special{dt 0.045}%
\special{pa 2200 2330}%
\special{pa 2400 3400}%
\special{dt 0.045}%
\special{pa 3600 2330}%
\special{pa 3800 3400}%
\special{dt 0.045}%
\special{pa 4000 2360}%
\special{pa 4200 3400}%
\special{dt 0.045}%
\special{pa 4400 2330}%
\special{pa 4600 3400}%
\special{dt 0.045}%
%
\special{pn 20}%
\special{pa 2600 1600}%
\special{pa 2800 2200}%
\special{fp}%
\special{pa 2800 1600}%
\special{pa 3000 2200}%
\special{fp}%
\special{pa 3000 1600}%
\special{pa 3200 2200}%
\special{fp}%
\special{pa 3200 1600}%
\special{pa 3130 1810}%
\special{fp}%
\special{pa 3000 2200}%
\special{pa 3070 1950}%
\special{fp}%
\special{pa 2870 1950}%
\special{pa 2800 2200}%
\special{fp}%
\special{pa 3000 1600}%
\special{pa 2930 1810}%
\special{fp}%
\special{pa 2800 1600}%
\special{pa 2730 1810}%
\special{fp}%
\special{pa 2600 2200}%
\special{pa 2670 1950}%
\special{fp}%
%
\special{pn 20}%
\special{pa 2600 2200}%
\special{pa 2600 2600}%
\special{fp}%
\special{pa 2600 2600}%
\special{pa 2400 2800}%
\special{fp}%
\special{pa 2400 2800}%
\special{pa 2160 2800}%
\special{fp}%
\special{pa 2080 2800}%
\special{pa 2030 2800}%
\special{fp}%
\special{pa 1970 2800}%
\special{pa 1760 2800}%
\special{fp}%
\special{pa 1680 2800}%
\special{pa 1630 2800}%
\special{fp}%
\special{pa 1570 2800}%
\special{pa 1340 2800}%
\special{fp}%
\special{pa 1260 2800}%
\special{pa 1200 2800}%
\special{fp}%
\special{pa 1110 2800}%
\special{pa 910 2800}%
\special{fp}%
\special{pa 910 2800}%
\special{pa 910 1800}%
\special{fp}%
\special{pa 910 1800}%
\special{pa 1110 1600}%
\special{fp}%
\special{pa 1110 1600}%
\special{pa 2600 1600}%
\special{fp}%
%
\special{pn 20}%
\special{pa 3200 1600}%
\special{pa 4500 1600}%
\special{fp}%
\special{pa 4500 1600}%
\special{pa 4700 1800}%
\special{fp}%
\special{pa 4700 1800}%
\special{pa 4700 2800}%
\special{fp}%
\special{pa 4700 2800}%
\special{pa 4380 2800}%
\special{fp}%
\special{pa 4270 2800}%
\special{pa 4240 2800}%
\special{fp}%
\special{pa 4150 2800}%
\special{pa 3980 2800}%
\special{fp}%
\special{pa 3880 2800}%
\special{pa 3850 2800}%
\special{fp}%
\special{pa 3770 2800}%
\special{pa 3550 2800}%
\special{fp}%
\special{pa 3460 2800}%
\special{pa 3430 2800}%
\special{fp}%
\special{pa 3350 2800}%
\special{pa 3200 2600}%
\special{fp}%
\special{pa 3200 2600}%
\special{pa 3200 2200}%
\special{fp}%
%
\special{pn 8}%
\special{sh 0.300}%
\special{pa 2530 2680}%
\special{pa 2530 2680}%
\special{pa 2530 3160}%
\special{pa 2450 3240}%
\special{pa 2450 2750}%
\special{pa 2530 2680}%
\special{fp}%
%
\special{pn 4}%
\special{pa 2456 3236}%
\special{pa 2450 3230}%
\special{fp}%
\special{pa 2486 3206}%
\special{pa 2450 3170}%
\special{fp}%
\special{pa 2516 3176}%
\special{pa 2450 3110}%
\special{fp}%
\special{pa 2530 3130}%
\special{pa 2450 3050}%
\special{fp}%
\special{pa 2530 3070}%
\special{pa 2450 2990}%
\special{fp}%
\special{pa 2530 3010}%
\special{pa 2450 2930}%
\special{fp}%
\special{pa 2530 2950}%
\special{pa 2450 2870}%
\special{fp}%
\special{pa 2530 2890}%
\special{pa 2450 2810}%
\special{fp}%
\special{pa 2530 2830}%
\special{pa 2450 2750}%
\special{fp}%
\special{pa 2530 2770}%
\special{pa 2482 2722}%
\special{fp}%
\special{pa 2530 2710}%
\special{pa 2514 2694}%
\special{fp}%
\end{picture}%
\caption{band sum for $L_{[6,3,6]}$ }\label{Fig1}
\end{figure}
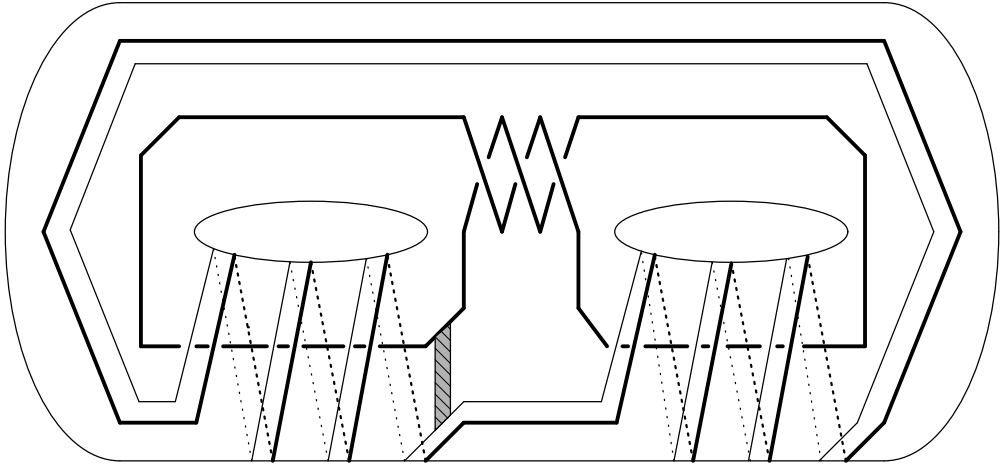

Moreover, 
in the case where $|v| \ne 1$ (resp. $|v| = 1$), 
we can see that 
the companion knot is a torus knot 
and the pattern knot is a hyperbolic knot (resp. a cable knot). 
See also \cite[Theorem 11.1]{GodaHayashiSong} 
in the case where $|v| = 1$. 
Note that we have 
$L_{[2w,\pm 1,2u]} (-w-u) \equiv L_{[2w'+1,2u'+1]} (-w'+u' \pm 1)$ 
for some $w'$ and $u'$.

Next suppose that $F$ is meridionally compressible. 
As in the proof of Claim \ref{claim_sphere}, 
perform meridional compressions as possible. 
It can be checked by the standard argument 
that meridional compressions preserve essentiality of surfaces. 
If some boundary curve of 
a meridionally compressing disk on $F$ is separating, 
then the same contradiction could occur as in Claim \ref{claim_sphere}, 
and so, there must be single meridional compression for $F$ 
along the non-separating curve on $F$. 
Then, by Lemma in Section 2, 
the link is equivalent to $L_{[2,n,-2]}$ with $|n| \ge 2$ 
and $F$ is an essential two punctured disk 
with two meridional punctures on $\partial N(K_2)$ 
and a single longitudinal boundary on $\partial N(K_1)$. 
Actually, by tubing operation, 
we can find a once-punctured torus or klein bottle 
embedded in $E(L)$ 
coming from a spanning surface for $K_1$.

Conversely, 
we can see that 0-surgery on $K_1 \subset L_{[2,n,-2]}$ with $|n| \ge 2$ 
gives the exterior of a knot $K'_2$ in $S^2 \times S^1$. 
This $K'_2$ intersects the level horizontal sphere 
in $S^2 \times S^1$ transversely twice. 
This implies that $E(K_2')$ contains a meridional annulus $A$. 
The annulus $A$ is incompressible 
otherwise the meridian of $K_2'$ bounds a disk in $E(K_2')$, 
contradicting Claim \ref{claim_disk}. 
Also $A$ is not boundary parallel since it is non-separating. 
Thus we conclude that the surgered manifold $L_{[2,n,-2]} (0)$ 
contains an essential annulus $A$. 

From the annulus, by tubing operation, we have 
a non-separating torus or klein bottle, 
which is incompressible by Claim \ref{claim_sphere} 
in the knot exterior. 

It remains that 
$L_{[2,n,-2]} (0)$ is not a Seifert fibered space but a graph manifold. 
Then, since $A$ is essential, 
$A$ must be isotoped so that $A$ is a union of Seifert fibers. 
Along this annulus $A$, we cut $L_{[2,n,-2]} (0)$ open 
to get a compact manifold, say $X_n$, which is the exterior of 
a pair of properly embedded arcs $t \cup t'$ in $S^2 \times [0,1]$. 
See Figure \ref{Fig2}. 
Actually we can see that $X_n$ is homeomorphic to 
the $(2,n)$-torus link exterior, and 
the copies of $A$ appear as 
meridional annuli on the boundary $\partial X_n$ of the knot exterior. 

\begin{figure}[htb]
\unitlength 0.1in
\begin{picture}( 20.4000, 20.4000)(  1.8000,-32.2000)
%
\special{pn 8}%
\special{ar 1200 2200 448 448  0.0000000 6.2831853}%
%
\special{pn 20}%
\special{pa 1000 1800}%
\special{pa 1000 1600}%
\special{fp}%
\special{pa 1400 1800}%
\special{pa 1400 1600}%
\special{fp}%
\special{pa 1000 1600}%
\special{pa 1000 1600}%
\special{fp}%
%
\special{pn 20}%
\special{pa 1000 1400}%
\special{pa 1000 1200}%
\special{fp}%
\special{pa 1400 1400}%
\special{pa 1400 1200}%
\special{fp}%
\special{pa 1000 1200}%
\special{pa 1000 1200}%
\special{fp}%
%
\special{pn 20}%
\special{pa 1400 1600}%
\special{pa 1300 1400}%
\special{fp}%
%
\special{pn 20}%
\special{pa 1300 1600}%
\special{pa 1200 1400}%
\special{fp}%
%
\special{pn 20}%
\special{pa 1100 1600}%
\special{pa 1000 1400}%
\special{fp}%
%
\special{pn 20}%
\special{pa 1200 1600}%
\special{pa 1100 1400}%
\special{fp}%
%
\special{pn 20}%
\special{pa 1000 1600}%
\special{pa 1030 1530}%
\special{fp}%
%
\special{pn 20}%
\special{pa 1200 1600}%
\special{pa 1230 1530}%
\special{fp}%
%
\special{pn 20}%
\special{pa 1100 1600}%
\special{pa 1130 1530}%
\special{fp}%
%
\special{pn 20}%
\special{pa 1300 1600}%
\special{pa 1330 1530}%
\special{fp}%
%
\special{pn 20}%
\special{pa 1370 1470}%
\special{pa 1400 1400}%
\special{fp}%
%
\special{pn 20}%
\special{pa 1170 1470}%
\special{pa 1200 1400}%
\special{fp}%
%
\special{pn 20}%
\special{pa 1060 1470}%
\special{pa 1090 1400}%
\special{fp}%
%
\special{pn 20}%
\special{pa 1260 1470}%
\special{pa 1290 1400}%
\special{fp}%
%
\special{pn 8}%
\special{ar 1200 2200 1020 1020  0.0000000 6.2831853}%
\end{picture}%
\caption{embedded arcs $t \cup t'$ in $S^2 \times [0,1]$. ($n=4$)}\label{Fig2}
\end{figure}
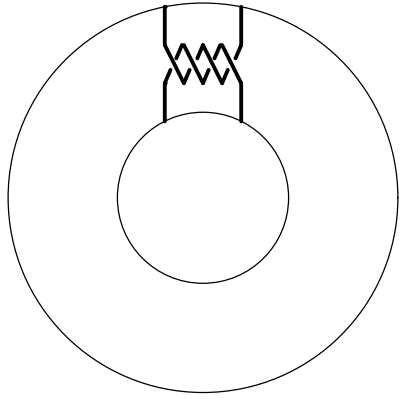

Thus we could actually verify that $X_n$ is Seifert fibered, 
but, in the case where $|n| \ge 2$, 
such annuli cannot be a union of Seifert fibers in $X_n$. 
This means that 
the surgered manifold $L_{[2,n,-2]} (0)$ is 
not a Seifert fibered space but a graph manifold. 
\end{proof}

\begin{claim}\label{claim_annulus}
There exists an essential annulus,
but no essential torus in $L(r)$ 
if and only if $L(r)$ is a small Seifert fibered space 
and $L$ is equivalent to 
\begin{enumerate}
\item
$L_{[3,2u+1]}$ and $r=u $,
\item
$L_{[2w+1,3]}$ and $r=-w-1 $,
\item
$L_{[3,-3]}$ and $r=-1$, or, 
\item
$L_{[2w+1,2u+1]}$ and $r=-w+u $
\end{enumerate}
with $w \ge 1, u \ne 0 , -1 $.
\end{claim}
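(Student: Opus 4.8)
The plan is to combine the reductions already used for Claims \ref{claim_sphere}--\ref{claim_torus} with the classification of annular Dehn fillings of two-bridge link exteriors from \cite[Theorem 5.1]{YQWu1999} and \cite[Theorem 11.1]{GodaHayashiSong}, and then to verify the converse by exhibiting the Seifert structures directly. The first step is to dispose of the ``equivalently'' clause. Since $L(r)$ is irreducible and $\partial$-irreducible by Claims \ref{claim_sphere} and \ref{claim_disk}, an irreducible $\partial$-irreducible compact orientable 3-manifold with a single torus boundary that contains an essential annulus but no essential torus is, by the standard structure theory of Seifert fibered spaces, a small Seifert fibered space (a solid torus is excluded by Claim \ref{claim_disk}); the converse implication, that a small Seifert fibered space with torus boundary contains an essential vertical annulus and has no essential torus, is elementary. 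It thus remains to determine the hyperbolic two-bridge links $L$ and slopes $r$ for which $L(r)$ contains an essential annulus but no essential torus.

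For the ``only if'' direction, suppose $L(r)$ contains an essential annulus $A$ but no essential torus. As $E(L)$ is hyperbolic it contains no essential annulus, so $A$ must meet the filling solid torus $V$; after isotopy $A \cap V$ is a nonempty union of meridian disks and $F := A \cap E(L)$ is a connected orientable essential properly embedded planar surface in $E(L)$ with a nonempty family of boundary curves of slope $r$ on $\partial N(K_1)$ and exactly two boundary curves on $\partial N(K_2)$. I would then argue just as in the proofs of Claims \ref{claim_sphere} and \ref{claim_torus}. If $F$ is meridionally incompressible, then by \cite[Theorem 3.1(a)]{FloydHatcher} the surface $F$ is carried by a branched surface $\Sigma_\gamma$ for a minimal edge-path $\gamma$ of small length, and the bookkeeping of \cite[Lemma 12.1]{GodaHayashiSong} together with the edge-label analysis in the proof of \cite[Theorem 1.5]{GodaHayashiSong} forces $L$ to be one of $L_{[3,2u+1]}$, $L_{[2w+1,3]}$, $L_{[3,-3]}$, $L_{[2w+1,2u+1]}$ with $r$ the resulting boundary slope, which is exactly the list of \cite[Theorem 5.1]{YQWu1999}. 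If $F$ is meridionally compressible, perform meridional compressions, which preserve essentiality; a compression along a separating curve of $F$ contradicts hyperbolicity through the Lemma in Section~2 exactly as in Claim \ref{claim_sphere}, while a single compression along a non-separating curve would force $L$ to be equivalent to $L_{[2,n,-2]}$ and $r=0$, whereupon the tubing construction of Claim \ref{claim_torus} yields an essential torus in $L_{[2,n,-2]}(0)$, against hypothesis. Hence $F$ may be taken meridionally incompressible and only the four families survive.

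For the converse I would verify, family by family, that $L(r)$ is a small Seifert fibered space, using the technique of Claim \ref{claim_torus}: a band sum of $K_2$ with a curve of slope $r$ on $\partial N(K_1)$ --- a Kirby move on the framed knot $(K_1,r)$ --- presents $L(r)$ as the exterior of a knot in a lens space, which in each of the four cases is a non-trivial, non-core torus knot, or a cable of a core; by \cite[Theorem 11.1]{GodaHayashiSong} and the surrounding discussion one identifies this exterior explicitly and reads off a Seifert fibration over the disk with exactly two exceptional fibers, so that $L(r)$ carries an essential vertical annulus but no essential torus. The coincidence $L_{[2w,\pm 1,2u]}(-w-u) \equiv L_{[2w'+1,2u'+1]}(-w'+u'\pm 1)$ recorded in the proof of Claim \ref{claim_torus} exhibits these small Seifert fillings as exceptional slopes neighbouring the toroidal graph-manifold fillings found there, and one checks finally that the stated parameter ranges $w \ge 1$ and $u \notin \{0,-1\}$ are sharp, the excluded values giving either a non-hyperbolic link or a hyperbolic filling.

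The step I expect to be the main obstacle is the ``only if'' direction: one must run the Floyd--Hatcher edge-path analysis precisely enough to see that a planar essential surface with boundary on both components, of the slope dictated by an essential annulus, can only be carried by the short edge-paths corresponding to the four listed continued fractions, and one must handle the meridionally compressible case so as not to lose candidate surfaces to the tubing operation. A secondary difficulty, on the converse side, is to compute Seifert invariants explicitly enough to confirm the presence of exactly two exceptional fibers --- hence genuinely no essential torus --- in every case, including the sporadic filling $L_{[3,-3]}(-1)$.
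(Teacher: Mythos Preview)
Your route differs from the paper's and carries a real gap. The paper does \emph{not} analyse the planar surface $F=A\cap E(L)$ via Floyd--Hatcher edge-paths at all. Instead it argues from the Seifert structure of $L(r)$: the essential annulus is vertical, so its boundary slope $r_2$ on $\partial N(K_2)$ is the regular fibre slope; by \cite[Proof of Corollary~2.6]{Saito} this slope is non-meridional (else $K_2$ would be a core of the lens space, against Claim~\ref{claim_disk}), and then, again by Saito, $K_2$ is a non-trivial non-core torus knot in the surgered lens space. Such a knot admits a reducing filling, hence some Dehn surgery on the two-bridge link $L$ is reducible; \cite[Theorem~5.1]{YQWu1999} then forces $L=L_{[b_1,b_2]}$, and \cite[Theorem~11.1]{GodaHayashiSong}, which is already an if-and-only-if classification of torus-knot-in-lens-space surgeries on such links, produces the four families and slopes and simultaneously supplies the converse.

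The specific gap in your argument is the meridionally compressible branch. The Lemma in Section~2 is about surfaces with \emph{meridional} punctures on one component, but your $F$ already carries two curves of the a~priori unknown slope $r_2$ on $\partial N(K_2)$; the paper's own argument shows $r_2$ is never meridional, so after meridional compression the pieces still have non-meridional $K_2$-boundary and the Lemma simply does not apply. The analogy with Claim~\ref{claim_sphere} fails because there $F$ had no $K_2$-boundary at all. Your incompressible branch is also not supported by the citations given: \cite[Lemma~12.1]{GodaHayashiSong} and the proof of \cite[Theorem~1.5]{GodaHayashiSong} treat surfaces with boundary on only one component (edge-paths in $D_\infty$), whereas a surface with non-meridional boundary on both tori is carried by an edge-path in some $D_t$ with finite $t$, for which the label bookkeeping is different and which you have not carried out. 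Finally, \cite[Theorem~5.1]{YQWu1999} gives only the length-two restriction, not the list of four families; that list is \cite[Theorem~11.1]{GodaHayashiSong}.
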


\begin{proof}
Suppose that 
there exists an essential annulus but no essential torus in $L (r)$. 
Then it is known that $L (r)$ must be a small Seifert fibered space. 

Let $r_2$ be the slope on $\partial N(K_2)$ 
determined by the boundary of the essential annulus. 
Then it is shown that $r_2 \ne 1/0$ as follows. 
Suppose for a contrary that $r_2 = 1/0$, i.e., $r_2$ is meridional. 
Now we are assuming that $L(r)$ is a Seifert fibered space, and 
the essential annulus coming from the surface $F$ must be vertical. 
This implies that the meridian of $K_2$ is a regular fiber of 
the Seifert fibration of $L(r)$. 
Then, as shown in \cite[Proof of Corollary 2.6]{Saito}, 
$K_2$ must be a core knot in the lens space. 
However it contradicts that $L(r)$ is not a solid torus as claimed before. 

Thus we see that $r_2 \ne 1/0$. 
Then, as also shown in \cite[Proof of Corollary 2.6]{Saito}, 
$K_2$ gives a non-trivial non-core torus knot in a lens space. 
In this case, if we perform suitable surgery on $K_2$, 
we have a reducible manifold, 
equivalently, 
a suitable surgery on the 2-bridge link $L$ yields a reducible manifold. 
Then, as a consequence of \cite[Theorem 5.1]{YQWu1999}, 
$L$ must be equivalent to a 2-bridge link corresponding to 
a continued irreducible fraction of length two. 

Now we can apply \cite[Theorem 11.1]{GodaHayashiSong}, 
which establishes a complete classification of 
such 2-bridge links and surgery slopes on which surgeries yield 
non-trivial non-core torus knots in lens spaces. 
This gives us the desired conclusions. 

\end{proof}

By these claims, we have obtained 
our classification of exceptional Dehn surgeries on 
components of hyperbolic two-bridge links.
\end{proof}

\section*{Acknowledgements}
The author would like to thank 
Chuichiro Hayashi and Kimihiko Motegi for helpful discussions.


\bibliographystyle{amsplain}

\begin{thebibliography}{99}

\bibitem{BW}
M. Brittenham and Y.-Q. Wu, 
\textit{The classification of Dehn surgery on 2-bridge knots}, 
Comm. Anal. Geom. \textbf{9} (2001), 97--113. 

\bibitem{FloydHatcher}
W. Floyd and A. Hatcher, 
\textit{The space of incompressible surfaces in a 2-bridge link complement}, 
Trans. Amer. Math. Soc. \textbf{305} (1988), no.2, 575--599.

\bibitem{Gabai}
D. Gabai, 
\textit{Surgery on knots in solid tori}, 
Topology \textbf{28} (1989), no. 1, 1--6. 

\bibitem{GodaHayashiSong}
H. Goda, C. Hayashi and H.J. Song, 
\textit{Dehn surgeries on 2-bridge links which yield reducible 3-manifolds},
 J. Knot Theory. Ramifications \textbf{18} (2009), 917--956.
 
\bibitem{Kawauchi}
A. Kawauchi, 
\textit{A Survey of knot theory}, 
Birckh\"{a}user-Varlag, Basel, 1996.

\bibitem{Menasco}
W. Menasco, 
\textit{Closed incompressible surfaces in alternating knot and link complements},  
Topology \textbf{23} (1984), 37--44. 

\bibitem{MiyazakiMotegi}
K. Miyazaki and K. Motegi,
\textit{On primitive/Seifert fibered constructions}, 
Math. Proc. Camb. Phil. Soc. \textbf{138} (2005), 421--435.

\bibitem{R} 
D. Rolfsen, 
\textit{Knots and Links}, 
Publish or Perish, Berkeley, Ca, 1976.

\bibitem{Saito}
T. Saito, 
\textit{Genus one 1-bridge knots as viewed from the curve complex}, 
Osaka. J. Math. \textbf{41} (2004), 427--454.

\bibitem{Th}
W.P. Thurston, 
\textit{The geometry and topology of three-manifolds}, 
notes, Princeton University, Princeton, 1980; 
available at \texttt{http://msri.org/publications/books/gt3m}

\bibitem{YQWu1999}
Y.-Q. Wu,
\textit{Dehn surgery on arborescent links}, 
Trans. Amer. Math. Soc., \textbf{351} (1999), no.6, 2275--2294.



\end{thebibliography}

\end{document}